\begin{document}


\begin{frontmatter}

\titledata{Perfect matchings and Hamiltonicity in the Cartesian product of cycles}{}

\authordata{John Baptist Gauci}
{Department of Mathematics, University of Malta, Malta.}
{john-baptist.gauci@um.edu.mt}
{}

\authordata{Jean Paul Zerafa}
{Dipartimento di Scienze Fisiche, Informatiche e Matematiche,\\ Universit\`{a} di Modena e Reggio Emilia, Via Campi 213/B, Modena, Italy}
{jeanpaul.zerafa@unimore.it}
{}

\keywords{Cartesian product of cycles, Hamiltonian cycle, perfect matching.}
\msc{05C70, 05C45, 05C76.}

\begin{abstract}
A pairing of a graph $G$ is a perfect matching of the complete graph having the same vertex set as $G$. If every pairing of $G$ can be extended to a Hamiltonian cycle of the underlying complete graph using only edges from $G$, then $G$ has the PH--property. A somewhat weaker property is the PMH--property, whereby every perfect matching of $G$ can be extended to a Hamiltonian cycle of $G$. In an attempt to characterise all 4--regular graphs having the PH--property, we answer a question made in 2015 by Alahmadi \emph{et al.} by showing that the Cartesian product $C_p\square C_q$ of two cycles on $p$ and $q$ vertices does not have the PMH--property, except for $C_4\square C_4$ which is known to have the PH--property.
\end{abstract}

\end{frontmatter}

\section{Introduction}

All graphs considered are finite, simple (without loops or multiple edges) and connected. A \emph{perfect matching} of a graph $G$ is a set of independent edges of $G$ which cover the vertex set $V(G)$ of $G$. If for a given perfect matching $M$ of $G$ there exists another perfect matching $N$ of $G$ such that $M \cup N$ is a Hamiltonian cycle of $G$, then we say that $M$ can be \emph{extended} to a Hamiltonian cycle. A graph admitting a perfect matching has the \emph{Perfect--Matching--Hamiltonian property} (for short the \emph{PMH--property}) if each of its perfect matchings can be extended to a Hamiltonian cycle. In this case we also say that $G$ is \emph{PMH}. Graphs having this property and other similar concepts have been studied by various authors such as in \cite{AGLMZ19, ThomassenEtAl, AFG, Fink, Haggkvist, LasVergnas, RuskeySavage, WangZhao, Yang}. For a more detailed introduction to the subject we suggest the reader to \cite{AGLMZ19}. 

The \emph{path graph}, \emph{cycle graph} and \emph{complete graph} on $n$ vertices are denoted by $P_n$, $C_n$ and $K_n$, respectively. A vertex of degree one is called an \emph{end vertex}. For any graph $G$, $K_{G}$ denotes the complete graph on the same vertex set $V(G)$ of $G$. Let $G$ be of even order. A perfect matching of $K_{G}$ is said to be a \emph{pairing} of $G$. In \cite{ThomassenEtAl}, the authors say that a graph $G$ has the \emph{Pairing--Hamiltonian property} (for short the \emph{PH--property}) if every pairing $M$ of $G$ can be extended to a Hamiltonian cycle $H$ of $K_{G}$ in which $E(H)-M\subseteq E(G)$. Clearly, this is a stronger property than the PMH--property and if a graph has the PH--property then it is also PMH. Amongst other results, the authors characterise which cubic graphs have the PH--property: $K_{4}$, the complete bipartite graph $K_{3,3}$ and the 3--dimensional hypercube $\mathcal{Q}_{3}$. Most of the notation and terminology that we use in the sequel is standard, and we refer the reader to \cite{BM} for definitions and notation not explicitly stated.

Having a complete characterisation of cubic graphs that have the PH--property, a natural pursuit would be to characterise 4--regular graphs having the same property, as also suggested by the authors in \cite{ThomassenEtAl}. Although Seongmin Ok and Thomas Perrett privately communicated to the authors of \cite{ThomassenEtAl} the existence of an infinite family of 4--regular graphs having the PH--property, it was suggested to tackle this characterisation problem by looking at the Cartesian product of two cycles $C_p\square C_q$ (Open Problem 3 in \cite{ThomassenEtAl}). In particular, the authors ask for which values of $p$ and $q$ does $C_p\square C_q$ have the PH--property.

In this work we show that $C_p\square C_q$ has the PH--property only when both $p$ and $q$ are equal to 4. In fact, the graph $C_4\square C_4$ is isomorphic to the 4--dimensional hypercube $\mathcal{Q}_{4}$, which was proved to have the PH--property in \cite{Fink} together with all other $n$--dimensional hypercubes. More precisely, we show that except for $\mathcal{Q}_{4}$, $C_p\square C_q$ is not PMH.

\section{Main Result}

\begin{definition}The \emph{Cartesian product} $G\square H$ of two graphs $G$ and $H$ is a graph whose vertex set is the Cartesian product $V(G) \times V(H)$ of $V(G)$ and $V(H)$. Two vertices $(u_i,v_j)$ and $(u_k,v_l)$ are adjacent precisely if  $u_i=u_k$ and $v_jv_l\in E(H)$ or $u_iu_k \in E(G)$ and $v_j=v_l$. Thus, $$V(G\square H) = \{(u_r,v_s) : u_r \in V(G) \text{ and } v_s \in V(H)\},\text{ and }$$
$$E(G\square H)=\{(u_i,v_j)(u_k,v_l):u_i=u_k,v_jv_l\in E(H)\text{ or } u_iu_k \in E(G), v_j=v_l\}.$$ 
\end{definition}

For simplicity, we shall refer to the vertex $(u_{r},v_{s})$ as $\omega${\tiny $_{r,s}$}. In this work we restrict our attention to the Cartesian product of a cycle graph and a path graph and to that of two cycle graphs, noting that the latter is also referred to in literature as a torus grid graph. In the sequel we tacitly assume that operations (including addition and subtraction) in the indices of the vertices of a cycle $C_{n}$ are carried out in a ``cyclic sense'', that is, going to $1$ upon reaching $n$, and vice-versa. 

We first prove the following result.

\begin{lemma}\label{LemmaCpPq}
The graph $C_{p}\square P_{q}$ is not PMH, for every $p,q\geq 3$.
\end{lemma}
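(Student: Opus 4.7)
The plan is to split into three cases based on the parities of $p$ and $q$.

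If $p$ and $q$ are both odd, then $|V(C_p\square P_q)|=pq$ is odd, so $C_p\square P_q$ admits no perfect matching and hence trivially fails the PMH-property.

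If $p$ is odd and $q$ is even, I would exhibit the perfect matching $M=\bigcup_{k=1}^{q/2} E_{2k-1}$, where $E_s$ denotes the set of $p$ edges of the form $\omega_{r,s}\omega_{r,s+1}$ (those joining consecutive copies of $C_p$). Every vertex is saturated by such an edge. Any Hamiltonian cycle $H\supseteq M$ must contain $E_1$, so at each vertex $\omega_{r,1}$ the second $H$-edge is forced to lie inside the first copy of $C_p$ (there is no edge from column $1$ to a ``column $0$''). Consequently the edges of $H$ inside this $C_p$-copy form a perfect matching of $C_p$, impossible when $p$ is odd.

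If $p$ is even, I would use the \emph{alternating} perfect matching $M$: on every odd-indexed column $s$ take the edges $\omega_{2k-1,s}\omega_{2k,s}$, and on every even-indexed column $s$ take $\omega_{2k,s}\omega_{2k+1,s}$ (indices cyclic modulo $p$); these are exactly the two perfect matchings of $C_p$, deployed alternately along the columns. To rule out every Hamiltonian extension $H$, I would set $h_s$ to be the number of non-$M$ edges of $H$ inside the $s$-th copy of $C_p$ and $r_s=|H\cap E_s|$. Degree-counting at column $s$ yields $2h_s+r_{s-1}+r_s=p$ (with $r_0=r_q=0$). A case analysis of the feasible tuples $(h_s,r_s)$ then shows that any such extension either places every edge of some column's $C_p$-copy into $H$ (producing an isolated $C_p$ component and so contradicting Hamiltonicity) or forces the column path-segments to be glued by the inter-column edges into at least two disjoint cycles rather than a single Hamiltonian cycle.

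The main obstacle lies in this final case analysis for $p$ even: the obstruction is global, hinging on how the path-segments distributed across the columns are linked by the inter-column edges. By contrast, the $p$-odd case is immediate once one observes that $C_p$ admits no perfect matching.
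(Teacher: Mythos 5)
Your first two cases are fine. When $p$ and $q$ are both odd there is nothing to prove, and when $p$ is odd and $q$ is even your matching $M=\bigcup_{k} E_{2k-1}$ works: indeed $E_1\subseteq M$ is an edge cut of odd size $p$, and a cycle meets every edge cut in an even number of edges, so no Hamiltonian cycle can contain all of $E_1$. This is essentially the paper's own argument for odd $p$, applied to the cut $E_1$ instead of the cut between the last two columns.

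The genuine gap is in the case $p$ even, which is the only substantive case. Your degree count $2h_s+r_{s-1}+r_s=p$ is correct but far too weak to finish: the sentence ``a case analysis of the feasible tuples $(h_s,r_s)$ then shows\ldots'' is precisely the claim to be proved, not an argument for it, and you acknowledge as much. The obstruction for your alternating matching really is global --- one can route a single open path through all of $V(C_p\square P_q)$ that respects your $M$ and fails only at the very last step of closing up into a cycle --- so any proof along these lines must control how the path segments inside the columns are concatenated by the $E_s$-edges, an analysis that depends on which of the two perfect matchings of $C_p$ occupies each column and which is entirely absent from your sketch. The paper sidesteps this difficulty by making the obstruction \emph{local}: it prescribes $M$ only on the last two columns, placing the perfect matching $\{\omega_{i,q-1}\omega_{i+1,q-1}\}$ (over odd $i$) on column $q-1$ and the complementary matching $\{\omega_{i-1,q}\omega_{i,q}\}$ (over odd $i$) on column $q$. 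Because every vertex of column $q$ has degree $3$, its unique non-matching edge in any Hamiltonian extension $N$ is forced to stay within the last two columns; hence for every odd $i$ either $\omega_{i,q}\omega_{i+1,q}\in N$ or both $\omega_{i,q-1}\omega_{i,q}$ and $\omega_{i+1,q-1}\omega_{i+1,q}$ lie in $N$, and $M\cup N$ then contains a cycle confined to columns $q-1$ and $q$, which is impossible for $q\geq 3$. To repair your proof you must either supply the missing linking argument for your global matching, or replace it with a matching whose non-extendability is certified by such a local, forced-edge argument.
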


\begin{proof}
Label the vertices of $C_{p}$ and $P_{q}$ consecutively as $u_{1}, u_{2}, \ldots, u_{p}$ and $v_{1},v_{2}, \ldots, v_{q}$, respectively, such that $v_1$ and $v_q$ are the two end vertices of $P_q$. If $p$ is odd (and so $q$ is even, otherwise $C_p\square P_q$ does not have a perfect matching), then there exists a perfect matching of $C_{p}\square P_q$ containing an odd cut, say $\{\omega${\tiny $_{1,q-1}$}$\omega${\tiny $_{1,q}$}$,\ldots,\omega${\tiny $_{p,q-1}$}$\omega${\tiny $_{p,q}$}$\}$. Clearly, this perfect matching cannot be extended to a Hamiltonian cycle. Thus, we can assume that $p$ is even. Let $M$ be a perfect matching of $C_{p}\square P_q$ containing $\omega${\tiny $_{i,q-1}$}$\omega${\tiny $_{i+1,q-1}$} and $\omega${\tiny $_{i-1,q}$}$\omega${\tiny $_{i,q}$}, for every odd $i\in[p]$, where $[p]=\{1,\ldots, p\}$. For contradiction, suppose that $N$ is a perfect matching of $C_{p}\square P_{q}$ such that $M\cup N$ is a Hamiltonian cycle. Then, for every odd $i\in [p]$, $N$ contains either $\omega${\tiny $_{i,q}$}$\omega${\tiny $_{i+1,q}$} or the two edges $\omega${\tiny $_{i,q-1}$}$\omega${\tiny $_{i,q}$} and $\omega${\tiny $_{i+1,q-1}$}$\omega${\tiny $_{i+1,q}$}. Therefore, $M\cup N$ contains a cycle with vertices belonging to $\{\omega${\tiny $_{1,q-1}$}$,\ldots,\omega${\tiny $_{p,q-1}$}$,\omega${\tiny $_{1,q}$}$,\ldots,\omega${\tiny $_{p,q}$}$\}$. Since $q>2$, $M\cup N$ is not a Hamiltonian cycle, a contradiction. Consequently, $C_{p}\square P_{q}$ is not PMH.
\end{proof}

Now, we prove our main result.

\begin{theorem}
Let $p,q\geq 3$. The graph $C_p \square C_q$ is PMH only when $p=4$ and $q=4$.
\end{theorem}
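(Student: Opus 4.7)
The plan is to split into cases based on the parities of $p$ and $q$ and, in each non-trivial case, to exhibit a perfect matching $M$ of $C_p\square C_q$ that does not extend to a Hamiltonian cycle. If $p$ and $q$ are both odd, then $pq$ is odd, so $C_p\square C_q$ admits no perfect matching and is vacuously not PMH. Using the isomorphism $C_p\square C_q\cong C_q\square C_p$, I henceforth assume that $p$ is even.

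When $q$ is odd, I would take $M$ to be the ``shifted alternating'' vertical matching: in every odd-indexed column $j$ it uses the pairs $\{\omega_{i,j},\omega_{i+1,j}\}$ for odd $i$, and in every even-indexed column the shifted pairs $\{\omega_{i-1,j},\omega_{i,j}\}$ for odd $i$. Because $q$ is odd, these two shifts fail to alternate at precisely one adjacency---between columns $q$ and $1$, which share the same shift. Running the bookkeeping of Lemma~\ref{LemmaCpPq} at this ``collision'', for each odd $i\in[p]$ only a few local configurations of a putative $N$ at $\{\omega_{i,q},\omega_{i+1,q},\omega_{i,1},\omega_{i+1,1}\}$ are admissible, and in each admissible combination $M\cup N$ is forced to contain a proper cycle whose vertices lie entirely in columns $q$ and $1$, contradicting Hamiltonicity.

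When $p$ and $q$ are both even with $(p,q)\neq(4,4)$, after possibly swapping factors I may assume $q\geq 6$. The shifted alternating matching now extends through a snake-type Hamiltonian cycle, so a different construction is needed. The natural candidate is to plant the shifted vertical matching of Lemma~\ref{LemmaCpPq} on both pairs of adjacent columns $\{q-1,q\}$ and $\{1,2\}$, and to complete $M$ arbitrarily on the interior columns $3,\ldots,q-2$, which is possible because $p$ and $q-4$ are both even. For a putative extension $N$: if no wraparound edge $\omega_{i,q}\omega_{i,1}$ is used then Lemma~\ref{LemmaCpPq} applied to $\{q-1,q\}$ immediately produces a cycle of $M\cup N$ inside those two columns; and otherwise the non-empty set $T\subseteq[p]$ of rows at which $N$ uses a wraparound edge is seen consistently from both columns $q$ and $1$. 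Running the lemma-style analysis at both trap-blocks simultaneously then shows that $M\cup N$ restricted to columns $\{q-1,q,1,2\}$ still contains a proper cycle---in the extreme $T=[p]$ this is the $2p$-cycle obtained by combining the $M$-edges of columns $1$ and $q$ with the $p$ wraparound $N$-edges.

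The main obstacle is this last case. When $T$ is a proper non-empty subset of $[p]$ the possible configurations of $N$ on the four corner columns are more flexible than in the endpoint setting of Lemma~\ref{LemmaCpPq}, and one must verify that every mixed configuration of wraparound and non-wraparound resolutions still leaves behind an obstructing cycle; this is the delicate step. The case $(p,q)=(4,4)$ is precisely the exception because, with only four columns in total, there is no room for the two trap-blocks to coexist disjointly from any interior columns, and a snake-type Hamiltonian cycle is able to fuse everything into a single cycle.
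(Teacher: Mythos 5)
Your overall strategy (exhibit, in each parity case, one perfect matching that cannot be extended) is the right one, and your trivial sub-cases are fine: both $p,q$ odd gives no perfect matching; $T=\emptyset$ reduces to Lemma~\ref{LemmaCpPq}; $T=[p]$ traps a $2p$-cycle in two columns. But the proof has a genuine gap at exactly the point where all the work lies, and you have flagged it yourself without resolving it. In the odd-$q$ case, the claim that every admissible configuration of $N$ forces a cycle ``whose vertices lie entirely in columns $q$ and $1$'' is not justified and is false as stated: a vertex $\omega_{i,q}$ whose $M$-edge is vertical has \emph{three} available $N$-edges (vertical, left to column $q-1$, right to column $1$), unlike the degree-restricted end column of $P_q$ in Lemma~\ref{LemmaCpPq}, so the putative Hamiltonian cycle can leave the seam through column $q-1$ (or column $2$), wander through the interior columns, and return; no purely local analysis at the sets $\{\omega_{i,q},\omega_{i+1,q},\omega_{i,1},\omega_{i+1,1}\}$ confines it. The same escape is the unresolved ``delicate step'' in your even-$q$ case when $T$ is a proper non-empty subset of $[p]$: rows in $[p]\setminus T$ feed the cycle into columns $q-1$ and $2$ and from there into columns $3,\dots,q-2$, on which your $M$ is arbitrary and hence gives you no control. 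Since all of your candidate matchings are built entirely from vertical (within-column) edges, nothing prevents such threading, and ruling it out is precisely the hard content of the theorem.

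For comparison, the paper avoids this by choosing a matching that is \emph{not} all-vertical: it contains the horizontal spine $c_id_i$ for every $i\in[p]$ (between two fixed adjacent columns), flanked by the two vertical alternations on columns $1,2$ and $5,6$. The spine forces the Hamiltonian cycle to keep returning to columns $3$ and $4$, and the resulting ``bracket'' decomposition (paths alternating in $N$ and $M$ between consecutive spine edges) confines the cycle to columns $1$ through $6$, giving an immediate contradiction for $q\geq 7$ and, after a finer analysis of which rows the brackets can reach, for $q=5,6$ as well; the case $q=3$ is handled by a separate small matching with a bridge, and $p=4$ by citing Alahmadi et al. If you want to salvage your construction, you would need an argument of comparable strength that controls the cycle's excursions away from the seam columns; as written, the proposal does not contain one.
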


\begin{proof}
The 4--dimensional hypercube $\mathcal{Q}_4 = C_{4}\square C_{4}$ has the PH--property by Fink's result in \cite{Fink}. Moreover, the authors in \cite{ThomassenEtAl} showed that $C_4\square C_q$ is not PMH when $q\neq 4$. Thus, in what follows we shall assume that $p$ is even and at least 6 and that $q$ is not equal to $4$. Let the consecutive vertices of $C_{p}$ and $C_{q}$ be labelled $u_{1}, u_{2}, \ldots, u_{p}$ and $v_{1},v_{2}, \ldots, v_{q}$, respectively.

We first consider the case when $q=3$. For simplicity, let the vertices $\omega${\tiny $_{i,1}$}$,\omega${\tiny $_{i,2}$}$,\omega${\tiny $_{i,3}$} be referred to as $a_{i}, b_{i}, c_{i}$, for each $i\in[p]$, and let $M$ be a perfect of $C_{p}\square C_{3}$ containing the following nine edges: $a_{1}a_{2},b_{1}b_{2},c_{1}c_{2},a_{3}c_{3},b_{3}b_{4},a_{4}a_{5},c_{4}c_{5},b_{5}b_{6},a_{6}c_{6}$, as shown in Figure \ref{FigureCpC3}. Since $p$ is even, such a perfect matching $M$ clearly exists.

\begin{figure}[h]
      \centering
      \includegraphics[width=0.3\textwidth]{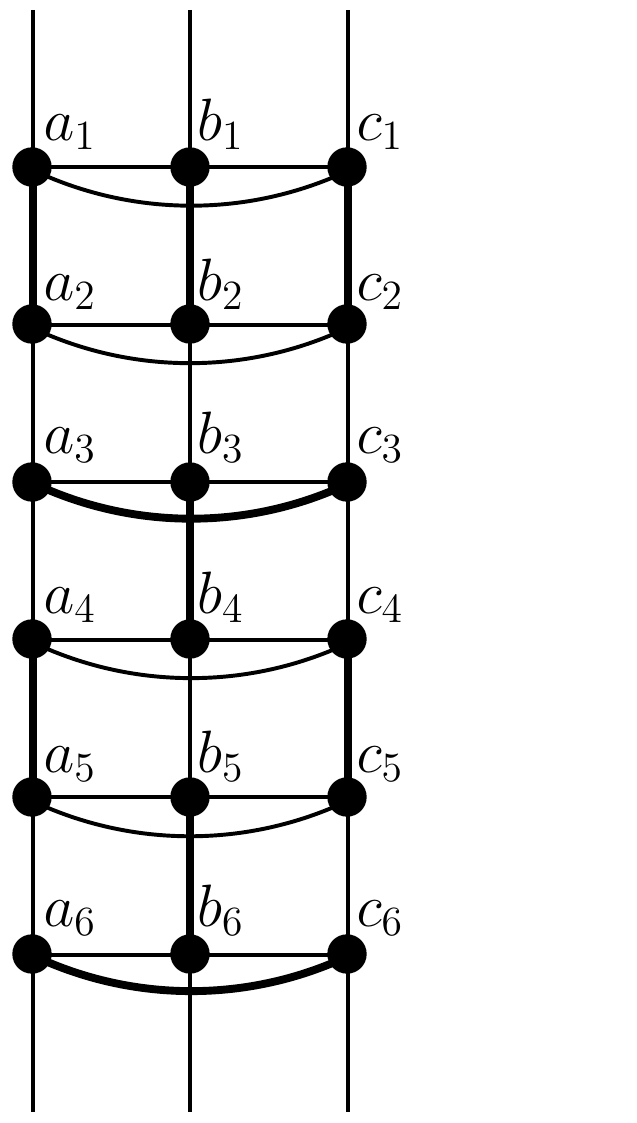}
      \caption{Edges belonging to the perfect matching $M$ in $C_{p}\square C_3$}
      \label{FigureCpC3}
\end{figure}

We claim that $M$ cannot be extended to a Hamiltonian cycle. For, suppose not, and let $N$ be a perfect matching of $C_{p}\square C_3$ such that $M\cup N$ is a Hamiltonian cycle. Each of the two sets $X_{1}=\{a_{3}a_{4},c_{3}c_{4}\}$ and $X_{2}=\{a_{5}a_{6},c_{5}c_{6}\}$ is a 2--edge-cut of the cubic graph $C_{p}\square C_{3}-M$, and so $|X_{i}\cap N|$ is even for each $i=1,2$. Moreover, the edge $b_{4}b_{5}$ is a bridge of the graph $C_{p}\square C_{3}-M$, and consequently, $M\cup N$ contains a cycle of length $4,6$ or $8$ with vertices belonging to $\{a_{3},a_{4},a_{5},a_{6},c_{3},c_{4},c_{5},c_{6}\}$, a contradiction. Therefore, $q\geq 5$.

Similar to above, for each $i\in[p]$, let the vertices $\omega${\tiny $_{i,1}$}$,\omega${\tiny $_{i,2}$}$,\ldots,\omega${\tiny $_{i,6}$} be referred to as $a_{i}, b_{i}, \ldots, f_{i}$ as in Figure \ref{FigureCpCq}, with $f_{i}$ being equal to $a_{i}$ if $q=5$. For each $i\in[p]$, let $L_{i}$ and $R_{i}$ represent $b_{i}c_{i}$ and $d_{i}e_{i}$, respectively, whilst $\mathcal{L}:=\{L_{i}:i\in[p]\}$ and $\mathcal{R}:=\{R_{i}:i\in[p]\}$. Let $M$ be a perfect matching of $C_{p}\square C_{q}$ containing the following edges:
\begin{enumerate}[label=(\roman*)]
\item $a_{i}a_{i+1}$ and $f_{i}f_{i+1}$, for every even $i\in[p]$,
\item $b_{i}b_{i+1}$ and $e_{i}e_{i+1}$, for every odd $i\in[p]$, and
\item $c_{i}d_{i}$, for every $i\in[p]$.
\end{enumerate}

\begin{figure}[h]
      \centering
      \includegraphics[width=0.6\textwidth]{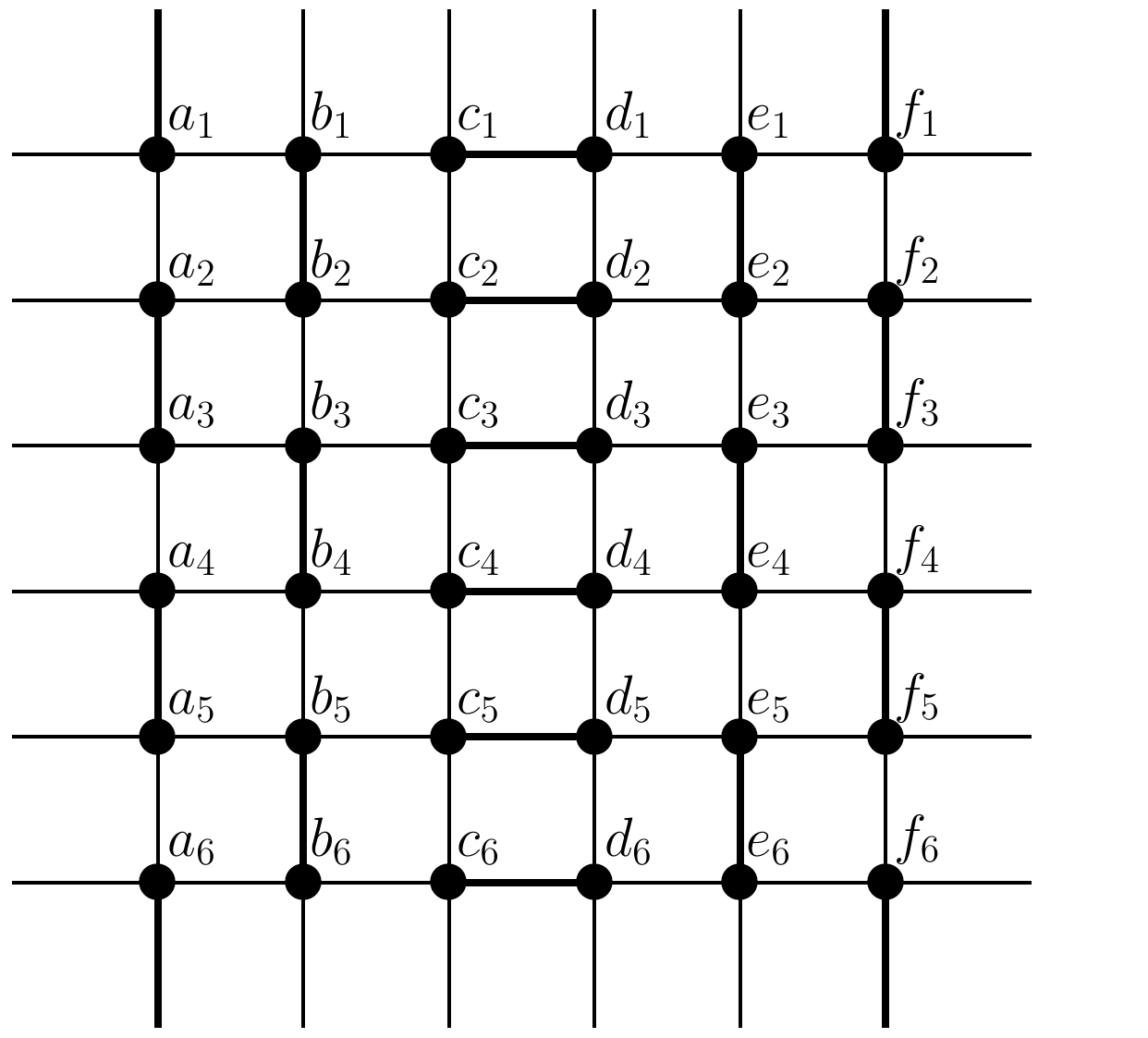}
      \caption{Edges belonging to the perfect matching $M$ in $C_{p}\square C_q$ when $q\geq 5$}
      \label{FigureCpCq}
\end{figure}

Once again, since $p$ is even, such a perfect matching $M$ exists. For contradiction, suppose that $N$ is a perfect matching of $C_{p}\square C_{q}$ such that $M\cup N$ is a Hamiltonian cycle $H$ of $C_{p}\square C_{q}$. The set of edges $\mathcal{L}$ (and similarly $\mathcal{R}$) is an even cut of order $p$ in the cubic graph $C_{p}\square C_{q}-M$. Consequently, both $|\mathcal{L}\cap N|$ and $|\mathcal{R}\cap N|$ are even. We claim that both sets $\mathcal{L}$ and $\mathcal{R}$ must be intersected by $N$. For, suppose that $\mathcal{R}\cap N$ is empty, without loss of generality. In this case, $M\cup N$ forms a Hamiltonian cycle of $C_{p}\square C_{q}-\mathcal{R}$, which is isomorphic to $C_{p}\square P_{q}$. By a similar reasoning to that used in the proof of Lemma \ref{LemmaCpPq}, this leads to a contradiction, and so $M$ cannot be extended to a Hamiltonian cycle. Therefore, both $\mathcal{L}\cap N$ and $\mathcal{R}\cap N$ are non-empty.

Next, we claim that a maximal sequence of consecutive edges belonging to $\mathcal{L}-N$ (or $\mathcal{R}-N$) is of even length, whereby ``consecutive edges" we mean that the indices of these edges are consecutive integers in a cyclic sense. For, suppose there exists such a sequence made up of an odd number of edges. Without loss of generality, let $L_{s}$ and $L_{s+2t}$ be the first and last edges of this sequence, for some $s\in[p]$ and $0\leq t <\sfrac{p}{2}$. Thus, $L_{s-1}$ and $L_{s+2t+1}$ are in $N$. In order for $N$ to cover all the vertices of the graph it must induce a perfect matching of the path $c_{s}c_{s+1}\ldots c_{s+2t}$, which has an odd number of vertices. This is not possible, and so our claim holds. Consequently, there exists $L_{\gamma}\in N$, for some odd $\gamma\in[p]$. We pair the edge $L_{\gamma}$ with the edge $L_{\gamma'}$, where $\gamma'$ is the least integer greater than $\gamma$ in a cyclic sense such that $L_{\gamma'}\in N$. More formally,
\[\gamma'= \left\{
\begin{array}{cl}
\min\{j\in\{\gamma+1,\ldots,p\}: L_{j}\in N\} & $ if such a minimum exists$,\\
  \min\{j\in\{1,\ldots,\gamma-1\}: L_{j}\in N\} & $ otherwise.$
\end{array}\right.\]
By the last claim we note that $\gamma'$ is even and that the next integer $\beta>\gamma'$ in a cyclic sense (if any) for which $L_{\beta}$ is in $N$ must be odd. Repeating this procedure on all the edges in $\mathcal{L}\cap N$ we get a partition of $\mathcal{L}\cap N$ into pairs of edges $\{L_{\gamma},L_{\gamma'}\}$ where $\gamma$ is odd and $\gamma'$ is even. The edges in $\mathcal{R}\cap N$ are partitioned into pairs in a similar way.

We remark that if we start tracing the Hamiltonian cycle $H$ from $c_{\gamma}$ going towards $b_{\gamma}$, then $H$ contains a path with edges alternating in $N$ and $M$, starting from $c_{\gamma}$ and ending at $c_{\gamma'}$. More precisely, if $\gamma'=\gamma+1$, then $H$ contains the path $c_{\gamma}b_{\gamma}b_{\gamma'}c_{\gamma'}$. Otherwise, if $\gamma'\neq \gamma+1$, then, for every even $j\in\{\gamma+1,\ldots,\gamma'-2\}$, $N$ contains either $b_{j}b_{j+1}$ or the two edges $a_jb_j$ and $a_{j+1}b_{j+1}$. Consequently, the internal vertices on this path belong to the set $\{b_{\gamma}, a_{\gamma+1},b_{\gamma+1},\ldots, a_{\gamma'-1},b_{\gamma'-1},b_{\gamma'}\}$. In each of these two cases we shall refer to such a path between $c_{\gamma}$ and $c_{\gamma'}$ as an $L_{\gamma}L_{\gamma'}$\emph{--bracket}, or just a \emph{left--bracket}, with $L_{\gamma}$ and $L_{\gamma'}$ being the \emph{upper} and \emph{lower} edges of the bracket, respectively. 

Having arrived at $c_{\gamma'}$, and noting that $c_{\gamma'}d_{\gamma'}\in M$, $H$ also traverses this edge to arrive at vertex $d_{\gamma'}$. At this point we can potentially take one of three directions, depending on whether $R_{\gamma'}$ is in $N$ or otherwise. If $R_{\gamma'}\in N$, then there exists an $R_{\alpha}R_{\gamma'}$--bracket for some odd $\alpha\in[p]$, where $\alpha$ is the greatest integer smaller than $\gamma'$ in a cyclic sense such that $R_{\alpha}\in N$. As above, this bracket consists of a path with edges alternating in $N$ and $M$, starting from $d_{\gamma'}$ and ending at $d_\alpha$, such that the other vertices of this path belong to:
\[\begin{array}{cl}
\{e_{\gamma'}, f_{\gamma'-1},e_{\gamma'-1},\ldots, f_{\alpha+1},e_{\alpha+1},e_{\alpha}\} & $ if $\alpha\neq \gamma'-1,\\
\{e_{\gamma'},e_{\alpha}\}& $ if $\alpha=\gamma'-1$.$
\end{array}\]
Otherwise, if $R_{\gamma'}\not\in N$, we either have $d_{\gamma'-1}d_{\gamma'}\in N$ or $d_{\gamma'}d_{\gamma'+1}\in N$. Continuing this process, the Hamiltonian cycle $H$ must eventually reach the vertex $c_{\gamma}$. Thus, $H$ contains only vertices in the set $\{a_i,b_i,c_i,d_i,e_i,f_i:i\in [p]\}$, giving a contradiction if $q\geq 7$. Henceforth, we can assume that $5\leq q \leq 6$.
Notwithstanding whether or not $R_{\gamma'}$ is in $N$, if $q=6$, then there is no instance in the above procedure which leads to $H$ passing through the vertices $a_{\gamma}$ and $a_{\gamma'}$, a contradiction. Hence, we can further assume that $q=5$.

We now note that for the vertices in the set $\{a_i,b_i,e_i:i\in [p]\}$ to be in $H$, they must belong either to a left--bracket or to a right--bracket. Thus, if $R_{i}\in N$ is a lower edge of a right--bracket, for some even $i\in[p]$, then, $R_{i+1}$ must be an upper edge of another right--bracket (that is, $R_{i+1}\in N$), otherwise, the vertex $e_{i+1}$ is not contained in any bracket. This observation, together with the fact that a maximal sequence of consecutive edges belonging to $\mathcal{R}-N$ is of even length, implies that if $R_{i}\not\in N$, for some even $i\in [p]$, then $d_{i}d_{i+1}\in N$. 

We revert back to the last remaining case, that is, when $q=5$. The only way how the Hamiltonian cycle $H$ can contain the vertices $a_{\gamma}$ and $a_{\gamma'}$ is when both $R_{\gamma}$ and $R_{\gamma'}$ do not belong to $N$, in which case $a_{\gamma}$ and $a_{\gamma'}$ can be reached by some right--bracket (or right--brackets). Therefore, suppose that $R_{\gamma}$ and $R_{\gamma'}$ do not belong to $N$.

Consequently, tracing $H$ starting from $c_{\gamma}$ and going in the direction of $b_{\gamma}$, after traversing the $L_{\gamma}L_{\gamma'}$--bracket, $H$ must then contain the path $c_{\gamma'}d_{\gamma'}d_{\gamma'+1}c_{\gamma'+1}$. First assume that $\gamma'+1\neq \gamma$. By the same reasoning used for the edges in $\mathcal{R}\cap N$, the lower edge $L_{\gamma'}$ must be followed by an upper edge, and thus $L_{\gamma'+1}\in N$. We trace the Hamiltonian cycle through an $L_{\gamma'+1}L_{\gamma''}$--bracket, noting in particular that for $a_{\gamma''}$ to be in $H$, $R_{\gamma''}$ does not belong to $N$, and hence $d_{\gamma''}d_{\gamma''+1}\in N$, since $\gamma''$ is even. Continuing this procedure, $H$ must eventually reach again the vertex $c_{\gamma}$, without having traversed any right--bracket. The same conclusion can be obtained if $\gamma'+1=\gamma$. In either case, the vertices $a_{\gamma}$ and $a_{\gamma'}$, together with several other vertices of $C_{p}\square C_{q}$, are untouched by $H$, a contradiction. As a result $M$ cannot be extended to a Hamiltonian cycle, proving our theorem.
\end{proof}

\end{document}